\newfont{\cyr}{wncyr10 scaled\magstep0}
\newcommand{\bb}[1]{\boldsymbol{#1}}
\newcommand{\C}{\mathbb{C}}
\newcommand{\F}{\mathbb{F}}
\newcommand{\PP}{\mathbb{P}}
\newcommand{\Q}{\mathbb{Q}}
\newcommand{\Z}{\mathbb{Z}}
\DeclareMathOperator{\Res}{Res} %residue
\DeclareMathOperator{\Tr}{Tr} %trace
\theoremstyle{plain}
 \newtheorem{theorem}{Theorem}[section]
 \crefname{theorem}{Theorem}{Theorems}
 \newtheorem{proposition}[theorem]{Proposition}
 \crefname{proposition}{Proposition}{Propositions}
 \newtheorem{lemma}[theorem]{Lemma}
 \crefname{lemma}{Lemma}{Lemmas}
 \newtheorem{corollary}[theorem]{Corollary}
 \crefname{corollary}{Corollary}{Corollaries}
 \crefname{conjecture}{Conjecture}{Conjectures}
 \crefname{hypothesis}{Hypothesis}{Hypotheses}
 \crefname{question}{Question}{Questions}
 \crefname{problem}{Problem}{Problems}
\theoremstyle{definition} 
 \crefname{definition}{Definition}{Definitions}
 \newtheorem{example}[theorem]{Example}
 \crefname{example}{Example}{Examples}
 \newtheorem{remark}[theorem]{Remark}
 \crefname{remark}{Remark}{Remarks}
\title{Quadratic fields, Artin-Schreier extensions, and Bell numbers}
\author{Yoshinosuke Hirakawa}
\address[Yoshinosuke Hirakawa]{Department of Mathematics \\ Faculty of Science and Technology \\ Tokyo University of Science, 2641, Yamazaki, Noda, Chiba, Japan}
\email{hirakawa\_yoshinosuke@rs.tus.ac.jp \\ hirakawa@keio.jp}
\thanks{This research was supported by the Research Grant of Keio Leading-edge Laboratory of Science \& Technology (Grant Numbers 2018-2019 000036 and 2019-2020 000074).
     This research was supported in part by KAKENHI 18H05233 and 21K13779.
     This research was conducted as part of the KiPAS program FY2014--2018 of the Faculty of Science and Technology at Keio University as well as the JSPS Core-to-Core program ``Foundation of a Global Research Cooperative Center in Mathematics focused on Number Theory and Geometry".}
\subjclass[2010]{primary 11B73, %Bell and Stirling numbers
	secondary
		11B83; %Special sequences and polynomials
		11M38; %Zeta and $L$-functions in characteristic $p$
		11R29; %Class numbers, class groups, discriminants
		12E20; %Finite fields (field-theoretic aspects)
		}
\keywords{Artin-Schreier extensions,
	Bell numbers,
	class numbers,
	congruent zeta functions,
	quadratic fields,
	trace formula}
\begin{document}

\begin{abstract}
In this article,
we prove a modulo $p$ congruence which connects
the class number of the quadratic field $\Q(\sqrt{(-1)^{(p-1)/2}p})$
and the trace of a certain monomial in a root $\theta$ of the Artin-Schreier polynomial $\theta^{p}-\theta-1$ over the field $\F_{p}$ of $p$ elements.
This formula has a flavor of Dirichlet's class number formula
which connects the class number and the $L$-value.
The proof of our formula is based on several formulae satisfied by the Bell number,
where the latter is defined as the number of partitions of $\{ 1, 2, ..., n \}$
and a purely combinatorial object.
Among such formulae,
we prove a generalization of the so called ``trace formula'' due to Barsky and Benzaghou
which describes the special values of the Bell polynomials modulo $p$ by the trace mentioned above.
\end{abstract}

\maketitle

%%%
%%% intro
%%%
\section{Introduction}

Since the 19th century,
the study of the class numbers of number fields
has been one of the major subjects in number theory.
They are important in themselves,
have many applications to Diophantine problems like Fermat's Last Theorem,
and are related with many other mathematical objects
(see e.g. \cite{BKOR,Chowla,Cox,Feustel-Holzapfel,Girstmair_popular,Hirzebruch-Zagier,Mordell_factorial,Rosen-Shnidman,Shioda-Inose,Shioda-Mitani}).
As far as the author knows, however,
no direct relation is known between
the class numbers of the quadratic extensions of the field $\Q$ of rational numbers
and the Artin-Schreier extension $\F_{p^{p}}$ of
the field $\F_{p}$ of $p$ elements.
In this article, we prove such a formula.

Let $p > 3$ be a prime
and set $p^{*} := (-1)^{(p-1)/2}p$.
Let $( \cdot /p)_{2}$ be the quadratic residue symbol modulo $p$,
$\Tr : \F_{p^{p}} \to \F_{p}$ be the trace map,
and take $\theta \in \F_{p^{p}}$ such that $\theta^{p} = \theta+1$.
Let $h(p^{*})$ be the class number of $\Q(\sqrt{p^{*}})$,
and $\epsilon_{p} = t_{p}+u_{p}\sqrt{p} > 1$ be the fundamental unit of $\Q(\sqrt{p}) \subset \mathbb{R}$
with $t_{p}, u_{p} \in 2^{-1}\Z$.
%Finally, set $\zeta(\F_{p}, s) := (1-p^{-s})^{-1}$.
Finally, set $\tau_{p}(a) := \sum_{j = 1}^{p-1} jp^{ja-1}$.
Then, our formula is stated as follows:

\begin{theorem} \label{main_1}
The following congruence holds for every $a \in \Z$ such that $p \nmid a$;
\[
	\left( \frac{-2a}{p} \right)_{2}\Tr(\theta^{\tau_{p}(a)})
	\equiv \left( \frac{p-1}{2} \right)!
	\equiv \begin{cases}
		(-1)^{\frac{h(p)+1}{2}} t_{p} \bmod p & \text{if $p \equiv 1 \bmod 4$}, \\
		(-1)^{\frac{h(-p)+1}{2}} \bmod p & \text{if $p \equiv -1 \bmod 4$}.
	\end{cases}
\]
\end{theorem}

Since $\tau_{p}(a)+1 \equiv (1-p^{-a})^{-1} \bmod (p^{p}-1)/(p-1)$ (cf.\ \cref{inverse})
%and $\tau_{p}(a) \equiv -(1-p^{a})^{-1} \bmod (p^{p}-1)/(p-1)$
is associated with the Riemann zeta function $\zeta(s) := \prod_{p} (1-p^{-s})^{-1}$,
it is not so strange to compare \cref{main_1} with the following consequence of the analytic class number formula (cf.\ \cite[\S49 and \S51]{Hecke}):
\[
	e^{\frac{1}{2}\sqrt{p^{*}} \Res_{s = 1}\zeta_{\Q(\sqrt{p^{*}})}(s)}
	= \begin{cases}
	\epsilon_{p}^{h(p)} & \text{if $p \equiv 1 \bmod 4$}, \\
	\bb i^{h(-p)} & \text{if $p \equiv -1 \bmod 4$ (and $p > 3$)},
	\end{cases}
\]
where $\zeta_{\Q(\sqrt{p^{*}})}(s)$ is the Dedekind zeta function of $\Q(\sqrt{p^{*}})$
and $\bb i$ is the imaginary unit.

In addition, what is interesting is that
our proof of \cref{main_1} is based on some properties of the Bell number $b_{n}$, 
which is defined as the number of partitions of $\{ 1, 2, ..., n \}$
and a purely combinatorial object.
The Bell number $b_{n}$ has a natural subdivision by 
the Stirling numbers $S(n, j)$ of the second kind,
which are the numbers $S(n, j)$ of the partitions of $\{ 1, 2, ..., n \}$ into non-empty $j$ subsets for non-negative integers $j$.
Since $S(n, j) = 0$ for $j > n$, the series
\[
	b_{n}(x)
	:= \sum_{j \geq 0} S(n, j) x^{j},
\]
defines a polynomial in $x$ satisfying $b_{n}(1) = b_{n}$,
which we call the Bell polynomial.
For known properties of these classical sequences,
we refer the reader to \cite{Aigner,Barsky_Bell,Barsky-Benzaghou,Bell_numbers,Bell_polynomials,Carlitz_1955,Comtet,De Angelis-Marcello,Ehrenborg,Gallardo-Rahavandrainy,Kahale,Junod_trace,Mezo_book,Radoux_survey,Radoux_Hankel}
and references therein.

A key ingredient of our proof of \cref{main_1} is the following congruence,
which is a generalization of
the ``trace formula'' due to Barsky and Benzaghou \cite[Th\'eor\`eme 2]{Barsky-Benzaghou}.

\begin{theorem} [\cref{trace_formula}] \label{main_2}
Let $a, m \in \Z_{\geq 0}$ such that $p \nmid a$.
Then, it holds that
\[
	a^{m}b_{m}(a^{-1})
	\equiv - \Tr(\theta^{\tau_{p}(a)})
		\Tr(\theta^{m-1-\tau_{p}(a)}) \bmod p.
\]
%In particular, we have
%$\Tr(\theta^{\zeta(\F_{p}, a)}) \equiv \Tr(\theta^{\tau_{p}(a)}) \equiv a^{\tau_{p}(a)}b_{\tau_{p}(a)}(a^{-1}) \bmod p$.
\end{theorem}

Here, we should emphasize that it is classically known that $b_{m}(x)$ satisfies a linear recursion
\[
	b_{m+p}(x) \equiv b_{m+1}(x) + x^{p}b_{m}(x) \bmod p
\]
(see \cref{Touchard_congruence}),
and hence it is not surprising that the sequence $(b_{m}(a^{-1}) \bmod p)_{m}$ is described by some traces of polynomials in $\theta$.
\cref{main_2} states, however, that this sequence can be described by the trace of the monomial $\theta^{m-1-\tau_{p}(a)}$ in $\theta$ (up to a subtle arithmetic constant in \cref{main_1}),
which is what is surprising.
\footnote{
	In this view,
	it is more natural to call this formula the ``monomial trace formula.''
	}
Moreover, \cref{main_2} (see also \cref{trace_calculus}) tells us some consecutive zeros as follows:
\begin{equation} \label{consecutuve_zeros}
	%b_{\tau_{p}(a)+1}(a^{-1}) \equiv a^{-\tau_{p}(a)}\Tr(\theta^{\tau_{p}(a)}) \bmod p
	b_{\tau_{p}(a)+1}(a^{-1})
	\equiv \cdots
	\equiv b_{\tau_{p}(a)+p-1}(a^{-1})
	\equiv b_{\tau_{p}(a)+p+1}(a^{-1})
	\equiv \cdots
	\equiv b_{\tau_{p}(a)+2p-1}(a^{-1})
	\equiv 0 \bmod p.
\end{equation}
%Here, note that $a^{\tau_{p}(a)+p}b_{\tau_{p}(a)+p}(a^{-1}) \equiv \Tr(\theta^{\tau_{p}(a)}) \bmod p$.
In fact, \cref{consecutuve_zeros}, which was established by Radoux \cite{Radoux_Artin_Schreier} for $a = 1$
and stated by Junod \cite[p.\ 78]{Junod_trace} in general,
is a key ingredient of the proof of \cref{main_1} (see \S4).

\begin{remark}
In fact, the statement of \cref{main_2} itself is essentially equivalent to
Junod's ``theorem'' \cite[Theorem 5]{Junod_trace}.
However, his proof contains several errors including an imprecise intermediate result \cite[Theorem 3]{Junod_trace}.
In this article, we correct these errors.
\end{remark}

In \S2,
we introduce the weighted Bell polynomial (cf.\  \cite{Carlitz_I,Carlitz_II})
as a generalization of the Bell polynomial
and recall its basic properties.
In \S3,
we prove \cref{main_2} in a generalized form for weighted Bell polynomials.
The idea of its proof is the same as that in \cite{Junod_trace}.
However, we shall clarify the crucial ideas.
In \S4,
we first reduce \cref{main_1} to the Hankel determinant formula in \cref{Bell_factorial}.
After that, we prove \cref{Bell_factorial} along Radoux's proof \cite{Radoux_Hankel} for $a = 1$.

%%%
%%% GF
%%%
\section{Weighted Bell polynomials and the generating series}

Among several generalizations of $S(n, j)$, %the classical Stirling number $S(n, j)$ of the second kind,
Carlitz \cite{Carlitz_I, Carlitz_II} introduced
the weighted Stirling ``number'' $R(n, j; \lambda)$ of the second kind,
which is a polynomial in $\lambda$ defined by
\[
	R(n, j; \lambda)
	:= \sum_{m \geq 0} \binom{n}{m} S(m, j) \lambda^{n-m}
		\in \Z[\lambda].
\]
Here, $\binom{n}{m}$ is the binomial coefficient.
The classical Stirling number $S(n, j)$ is recovered as the constant term $R(n, j; 0)$.
%%%
\begin{comment}
In terms of the Bernoulli polynomial $B^{(j)}_{n}(\lambda)$ of higher order \cite{Norlund}
defined by
\[
	\sum_{n \geq 0} B^{(j)}_{n}(\lambda) \frac{z^{n}}{n!}
	= \left( \frac{z}{e^{z}-1} \right)^{j} e^{\lambda z},
\]
we can identify $R(n, j; \lambda)$ as follows
\[
	R(n, j; \lambda)
	= \binom{n}{j} B^{(-j)}_{n-j}(\lambda).
\]
\end{comment}
%%%
Since $R(n, j; \lambda)$ is characterized by the initial condition
\[
	\sum_{n \geq 0} R(n, 0; \lambda) z^{n}
	= \sum_{n \geq 0} \lambda^{n}z^{n}
		= \frac{1}{1-\lambda z}
\]
and the recursive condition
\begin{align*}
	R(n, j; \lambda)
	= (\lambda+j) R(n-1, j, \lambda) + R(n-1, j-1, \lambda),
\end{align*}
its ordinary generating series is given by
\begin{equation} \label{GF_Stirling}
	F_{j}(\lambda, z) := \sum_{n \geq 0} R(n, j; \lambda) z^{n}
	= \frac{1}{1-\lambda z} \prod_{i = 1}^{j} \frac{z}{1-(\lambda+i)z}.
	%= z^{j} \prod_{i = 0}^{j} \frac{1}{1-(\lambda+i)z}.
\end{equation}
%cf.\  formal solution of a differential equation developed by Taylor expansion
We can identify $R(n, j; \lambda)$ with the coefficients of the asymptotic expansion of the Tate twist of the absolute zeta function $\zeta_{\PP^{j}/\F_{1}}(z-\lambda)$ of the $j$-dimensional projective space \cite{Koyama-Kurokawa_Tate,Kurokawa_F1} because
\[
	z^{-1}F_{j}(\lambda, z^{-1})
	= \prod_{i = 0}^{j} \frac{1}{(z-\lambda)-i}
	= \zeta_{\PP^{j}/\F_{1}}(z-\lambda).
\]

Let $b_{n}(x, \lambda)$ be a polynomial in $x$ and $\lambda$ defined by
\[
	b_{n}(x, \lambda)
	:= \sum_{j \geq 0} R(n, j; \lambda) x^{j},
\]
which is denoted by $r_{n}(t)$ in \cite{Howard}.
We call $b_{n}(x, \lambda)$ the {\it weighted Bell polynomial}.
For related sequences, see e.g.\ \cite[\S7]{Carlitz_I} and \cite{Broder,Mezo,Mezo-Ramirez}.
Since $R(n, j; 0) = S(n, j)$,
we see that $b_{n}(x, 0) = b_{n}(x)$.
Moreover, since
\begin{equation} \label{convolution}
	b_{n}(x, \lambda)
	= \sum_{j \geq 0} \sum_{m \geq 0}
		\binom{n}{m} S(m, j) \lambda^{n-m} x^{j}
			= \sum_{m \geq 0} \binom{n}{m} b_{m}(x) \lambda^{n-m},
\end{equation}
the exponential generating series of $b_{n}(x, \lambda)$ is given by
\[
	E(x, \lambda, z)
	:= \sum_{n \geq 0} b_{n}(x, \lambda) \frac{z^{n}}{n!}
	= e^{\lambda z} \sum_{n \geq 0} b_{n}(x) \frac{z^{n}}{n!}
	= e^{x(e^{z}-1)+\lambda z} \in \Q[\lambda][[x, z]]
\]
(cf.\  \cite[Theorem 3.1]{Mezo}).
On the other hand,
\cref{GF_Stirling} leads us to the following representation of the ordinary generating series of $b_{n}(x, \lambda)$.

\begin{theorem} \label{GF_Bell}
Let $F(x, \lambda, z) := \sum_{n \geq 0} b_{n}(x, \lambda)z^{n}$.
Then, we have
\[
	F(x, \lambda, z)
	= \frac{1}{1-\lambda z} \sum_{n \geq 0} \prod_{j = 1}^{n} \frac{xz}{1-(\lambda+j)z}.
\]
\end{theorem}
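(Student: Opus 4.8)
The plan is to derive \eqref{GF_Bell} directly from the Stirling-number generating series \eqref{GF_Stirling} by interchanging the order of summation over $n$ and $j$. Concretely, I would start from the definition $F(x,\lambda,z)=\sum_{n\geq 0}b_n(x,\lambda)z^n=\sum_{n\geq 0}\sum_{j\geq 0}R(n,j,\lambda)x^j z^n$, and then swap the two sums to obtain $\sum_{j\geq 0}x^j\bigl(\sum_{n\geq 0}R(n,j,\lambda)z^n\bigr)$. The inner sum is exactly what \eqref{GF_Stirling} computes, so this equals $\sum_{j\geq 0}x^j\cdot\frac{1}{1-\lambda z}\prod_{i=1}^{j}\frac{z}{1-(\lambda+i)z}$. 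Pulling the $j$-independent factor $\frac{1}{1-\lambda z}$ out of the sum and absorbing the $x^j=\prod_{i=1}^{j}x$ into the product gives $\frac{1}{1-\lambda z}\sum_{j\geq 0}\prod_{i=1}^{j}\frac{xz}{1-(\lambda+i)z}$, which is the claimed right-hand side after renaming the summation index $j$ to $n$ and the product index $i$ to $j$.

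The only subtlety is justifying the interchange of summations, since we are working with formal power series in $z$ (and polynomials in $x,\lambda$). I would make this rigorous by observing that for each fixed power $z^N$, only finitely many pairs $(n,j)$ contribute: $R(n,j,\lambda)$ is a polynomial in $\lambda$, but in the generating series \eqref{GF_Stirling} the factor $\prod_{i=1}^{j}\frac{z}{1-(\lambda+i)z}$ contributes to the coefficient of $z^N$ only when $j\leq N$ (because of the $z^j$ in the numerator of the product), and for each such $j$ only finitely many $n$ matter. Hence the double family $\{R(n,j,\lambda)x^j z^n\}_{n,j\geq 0}$ is summable in $\mathbb{Z}[x,\lambda][[z]]$ and Fubini for formal power series applies. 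Equivalently, one can simply note that $b_n(x,\lambda)$ is a \emph{finite} sum over $j$ for each $n$, so $F(x,\lambda,z)$ is a well-defined element of $\mathbb{Z}[x,\lambda][[z]]$, and then the rearrangement is a legitimate identity of formal power series.

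I expect no real obstacle here: the statement is essentially a bookkeeping consequence of \eqref{GF_Stirling} together with the definition $b_n(x,\lambda)=\sum_j R(n,j,\lambda)x^j$. The mild care required is purely in phrasing the convergence/summability argument so that the interchange of $\sum_n$ and $\sum_j$ is valid in the ring of formal power series in $z$; once that is in place the computation is a one-line substitution.
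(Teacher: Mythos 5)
Your proposal is correct and is exactly the derivation the paper intends: the theorem is stated as an immediate consequence of \eqref{GF_Stirling}, obtained by interchanging $\sum_{n}$ and $\sum_{j}$ in $\sum_{n}\sum_{j}R(n,j,\lambda)x^{j}z^{n}$ and absorbing $x^{j}$ into the product, and your summability justification (finitely many $(n,j)$ contribute to each power of $z$, since $R(n,j,\lambda)=0$ for $j>n$ and the $j$-th product is divisible by $z^{j}$) is the right way to make the interchange rigorous.
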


Although its proof is obvious,
\cref{GF_Bell} leads us to several significant applications.
For example, we can generalize \cite[Theorem 3.2]{Mezo} in a simpler manner.

\begin{corollary} \label{GF_Bell_Mezo}
The following identity holds in $\Z[\lambda][[x, z]]$:
\[
	F(x, \lambda, z)
	= e^{-x} \sum_{n \geq 0} \frac{x^{n}}{(1-(\lambda+n)z)n!}.
\]
\end{corollary}

\begin{proof}
It is sufficient to prove that
\[
	(1-\lambda z)F(x, \lambda, z)
	= e^{-x} \sum_{n \geq 0} \left( \frac{1-\lambda z}{1-(\lambda +n)z} \cdot \frac{x^{n}}{n!} \right).
\]
Moreover, it is sufficient to compare the coefficients of $x^{n}/n!$ for $n \geq 1$ as elements of $\Z[\lambda][[z]]$,
which boils down to checking the following partial fractional decomposition
\[
	\prod_{j = 1}^{n} \frac{jz}{1-(\lambda+j)z}
	= \sum_{j \geq 1} (-1)^{n-j} \binom{n}{j} \frac{jz}{1-(\lambda+j)z}
		= \sum_{j \geq 0} (-1)^{n-j} \binom{n}{j} \frac{1-\lambda z}{1-(\lambda+j)z}.
\]
Here, the second equality is a consequence of $ \sum_{j \geq 0} (-1)^{n-j} \binom{n}{j} = 0$.
On the other hand, the first equality holds
because if we divide the both sides by $z$,
then we obtain the same rational functions of $z$
\[
	\sum_{j = 1}^{n} \frac{\rho_{j}(\lambda)}{z-(\lambda+j)^{-1}}
\]
which is characterized by simple poles at $z = (\lambda+j)^{-1}$ ($1 \leq j \leq n$)
with residues
\[
	\rho_{j}(\lambda)
	:= - \frac{\frac{j}{\lambda+j}}{\lambda+j} \prod_{\substack{1 \leq i \leq n \\ i \neq j}} \frac{\frac{i}{\lambda+j}}{1-\frac{\lambda+i}{\lambda+j}}
		= - \frac{n!}{(\lambda+j)^{2}} \prod_{\substack{1 \leq i \leq n \\ i \neq j}} \frac{1}{j-i}
			= (-1)^{n-j-1} \binom{n}{j}\frac{j}{(\lambda+j)^{2}}
\]
and a zero at $z = \infty$. This completes the proof.
\end{proof}

\begin{remark}
In \cite[Theorem 3.2]{Mezo},
Mez\"o stated \cref{GF_Bell_Mezo} for $\lambda = r \in \Z_{\geq 0}$
in terms of the hypergeometric function $_{1}F_{1}$
as follows:
\[
	F(x, r, z)
	= \frac{e^{-x}}{1-rz} {}_{1}F_{1}\left( \left. \begin{matrix} r-z^{-1} \\ r+1-z^{-1} \end{matrix} \right| x \right)
		= \frac{e^{-x}}{1-rz} \sum_{n \geq 0} \left( \prod_{j = 0}^{n-1} \frac{r-z^{-1}+j}{r-z^{-1}+j+1} \right) \frac{x^{n}}{n!}.
\]
\end{remark}

An advantage of $F(x, \lambda, z)$ over $E(x, \lambda, z)$ is that
its reduction modulo an arbitrary integer is well-defined,
and it has several applications to arithmetic properties of $b_{n}(x, \lambda)$ modulo integers.
For instance, we can generalize \cite[Theorem 1]{Barsky-Benzaghou} and \cite[Theorem 1]{Junod_trace}
following the same method as the original proofs in \cite{Barsky-Benzaghou,Junod_trace}.

\begin{corollary} \label{rationality}
Let $m \in \Z_{\geq 1}$.
Then, the following congruence holds in $(\Z/m\Z)[x, \lambda][[z]]$:
\begin{equation*} \label{GF_congruence_lambda}
	F(x, \lambda, z)
	\equiv \frac{\sum_{k = 0}^{m-1} (xz)^{k} \prod_{j = k+1}^{m-1} (1-(\lambda+j)z)}{\prod_{j = 0}^{m-1} (1-(\lambda+j)z) - (xz)^{m}}
		\bmod m\Z.
\end{equation*}
In particular, for every prime $p$, every positive integer $n$, and every integer $r$,
the following congruence holds in $(\Z_{p}/(np/2)\Z_{p})[x][[z]]$:
\begin{equation} \label{GF_congruence_r}
	F(x, r, z)
	%\equiv \frac{\sum_{k = 0}^{np-1} (xz)^{k} \prod_{j = k+1}^{np-1} (1-(r+j)z)}{(1-z^{p-1})^{n} - (xz)^{np}}
	\equiv \frac{\sum_{k = 0}^{np-1} (xz)^{np-1-k} \prod_{j = 1}^{k} (1-(r-j)z)}{(1-z^{p-1})^{n} - (xz)^{np}}
			\bmod \frac{np}{2}\Z_{p}.
\end{equation}
\end{corollary}

\begin{proof}
First, note that
by substituting $qm+k$ for $n$ in \cref{GF_Bell},
we can decompose the single summation over $n \geq 0$ to a double summation as follows:
\[
	F(x, \lambda, z)
	= \frac{1}{1-\lambda z}\sum_{q \geq 0} \sum_{k = 0}^{m-1}
		\prod_{j = 1}^{qm} \frac{xz}{1-(\lambda+j)z}
			\prod_{j = 1}^{k} \frac{xz}{1-(\lambda+qm+j)z}.
\]
Therefore, we obtain \cref{GF_congruence_lambda} as follows:
\begin{align*}			
	F(x, \lambda, z)
	&\equiv \frac{1}{1-\lambda z} \sum_{k = 0}^{m-1} \prod_{j = 1}^{k} \frac{xz}{1-(\lambda+j)z}
		\sum_{q \geq 0} \left( \prod_{j = 0}^{m-1} \frac{xz}{1-(\lambda+j)z} \right)^{q}
			\bmod m\\
	&\equiv \sum_{k = 0}^{m-1} (xz)^{k} \prod_{j = 0}^{k} (1-(\lambda+j)z)^{-1}
		\left( 1-\prod_{j = 0}^{m-1} \frac{xz}{1-(\lambda+j)z} \right)^{-1} \bmod m \\
	&\equiv \frac{\sum_{k = 0}^{m-1} (xz)^{k} \prod_{j = k+1}^{m-1} (1-(\lambda+j)z)}{\prod_{j = 0}^{m-1} (1-(\lambda+j)z) - (xz)^{m}} \bmod m.
\end{align*}
For \cref{GF_congruence_r},
let $n = n_{0}p^{v}$ with $\nu \in \Z$ and $n_{0} \in \Z \setminus p\Z$.
Then, \cite[Lemma 1.3]{Gertsch-Robert} implies that
\[
	\prod_{j = 0}^{np-1} (1-(r+j)z)
	\equiv \left( \prod_{j = 0}^{p^{v+1}-1} (1-jz) \right)^{n_{0}}
		\equiv ((1-z^{p-1})^{p^{v}})^{n_{0}} \bmod \frac{1}{2}p^{v+1}\Z_{p}.
\]
On the other hand, the following congruence holds:
\begin{align*}
	\sum_{k = 0}^{np-1} (xz)^{k} \prod_{j = k+1}^{np-1} (1-(r+j)z)
	&= \sum_{k = 0}^{np-1} (xz)^{k} \prod_{j = 1}^{np-1-k} (1-(r+np-j)z) \\
	&\equiv \sum_{k = 0}^{np-1} (xz)^{np-1-k} \prod_{j = 1}^{k} (1-(r-j)z) \bmod np,
\end{align*}
By combining them with \cref{GF_congruence_lambda},
we obtain \cref{GF_congruence_r}.
\end{proof}

\begin{example} \label{Touchard_congruence}
Suppose that $p$ is odd, $n = 1$, and $r = 0$.
Then, \cref{GF_congruence_r} implies that
\[
	\left( 1-z^{p-1}-(xz)^{p} \right) \sum_{n \geq 0} b_{n}(x) z^{n}
	\equiv \sum_{k = 0}^{p-1} (xz)^{k} \prod_{j = k+1}^{p-1} (1-jz) \bmod p.
\]
By comparing the coefficients of $z^{n+p}$,
we obtain the congruence \cite[(5)]{Radoux_Touchard} mentioned in the previous section:
\begin{equation} \label{Touchard_1}
	b_{n+p}(x) - b_{n+1}(x) - x^{p}b_{n}(x) \equiv 0 \bmod p.
\end{equation}
In fact,
in a similar manner (with some auxiliary congruences),
we can prove that
\begin{equation} \label{Touchard_2}
	b_{n+p}(x, \lambda) - b_{n+1}(x, \lambda) -  (\lambda^{p}-\lambda+x^{p})b_{n}(x, \lambda)
	\equiv 0 \bmod p.
\end{equation}
In the case of $\lambda \in \Z$,
the above congruence has already appeared in the proof of \cite[Theorem 3.1]{Howard}
and was rediscovered in \cite[Theorem 4]{Mezo-Ramirez}.
In fact,
both \cref{Touchard_1} and \cref{Touchard_2} hold also for $p = 2$,
which we can check more directly (cf.\ \cite{Touchard}, \cite[Lemma 4]{Williams}).
\end{example}

%%%
%%% trace formula
%%%
\section{Trace formula for weighted Bell polynomials}

Let $p$ be an arbitrary prime.
In this section, we prove the following ``trace formula''
for weighted Bell polynomials $b_{n}(x, \lambda)$ specialized at $x = a$ for an integer $a$,
which is a generalization of \cite[Th\'eor\`eme 2]{Barsky-Benzaghou}.

\begin{theorem} \label{trace_formula}
Let $a, m \in \Z_{\geq 0}$ such that $p \nmid a$
and $\Tr : \F_{p^{p}}[\lambda] \to \F_{p}[\lambda]$ be the $\F_{p}$-linear extension of $\Tr : \F_{p^{p}} \to \F_{p}$.
Then, the following congruence holds in $\F_{p}[\lambda]$:
\[
	a^{m}b_{m}(a^{-1}, \lambda)
	\equiv - \Tr(\theta^{\tau_{p}(a)})
		\Tr\left( \frac{(a\lambda+\theta)^{m}}{\theta^{1+\tau_{p}(a)}} \right) \bmod p.
\]
\end{theorem}

\begin{remark} \label{r = 0}
Before the proof of \cref{trace_formula},
recall that
\[
	b_{m}(x, \lambda) = \sum_{j \geq 0} \binom{m}{j} b_{j}(x) \lambda^{m-j}
\]
(cf.\  \cref{convolution}).
Therefore,
it is sufficient to prove \cref{trace_formula} for $\lambda = 0$.
In this case,
the statement itself has already appeared in Junod's paper \cite[Theorem 5]{Junod_trace}.
However, his proof is imprecise.
Moreover, he stated
an intermediate result \cite[Theorem 3]{Junod_trace},
which is incorrect.
In this section, we correct these errors and give self-contained proofs.
The ideas of our proofs are the same as \cite{Junod_trace}, but we shall clarify the crucial ideas.
%In particular, we shall emphasize where detailed analysis is necessary.
%not so objective?
\end{remark}

\begin{remark} [$p = 2$] \label{p = 2}
The proof of \cref{trace_formula} for $p = 2$ is quite easy:
In this case, we may assume that $a = 1$.
Since $\tau_{2}(1) = 1$,
$b_{n+2} \equiv b_{n+1}+b_{n} \bmod 2$,
and $\theta^{3} = 1$,
it is sufficient to prove the formula for $m = 0, 1, 2$,
which boils down to checking that
\[
	b_{0}(1) = 1, \quad
	b_{1}(1) = 1, \quad
	b_{2}(1) = 2,
\]
and
\[
	\mathrm{Tr(\theta^{-2})}
		\equiv 1  \bmod 2, \quad
	\mathrm{Tr(\theta^{-1})} \equiv 1 \bmod 2, \quad
	\mathrm{Tr(\theta^{0})} \equiv 2 \bmod 2.
\]
\end{remark}
%In fact, one can recovers the above remark by summarizing several facts appering in \cite[\S 3]{Barsky-Benzaghou}.

In what follows, we fix an odd prime $p$.
Let $n \in \Z_{\geq 1}$.
Define two polynomials in $\Z[x, z]$ by
\[
	g_{x, n}(z) := (1-z^{p-1})^{n} - (xz)^{np}
	\quad \text{and} \quad
	g^{*}_{x, n}(z) := z^{np}g_{x, n}(z^{-1}) = (z^{p}-z)^{n} - x^{np}.
\]
In what follows, $\overline{F}$ denotes a fixed algebraic closure of each field $F$.
Let $\mathcal{O}$ be the ring of integers in $\overline{\Q_{p}}$
and $\zeta_{n} \in \mathcal{O}^{\times}$ be a primitive $n$-th root of unity.
%since $n$ may be divisible by $p$, we need ramified extensions
Note that $g^{*}_{x, n}(z) \in \mathcal{O}[[z]]$ if and only if $x \in \mathcal{O}^{\times}$.

%In the proof of \cref{intermediate}, we use the following lemma:

\begin{lemma} \label{separable}
\footnote{
	In the proof of \cite[Theorem 3]{Junod_trace},
	Junod used \cite[Lemma]{Junod_trace},
	which states that
	if one takes $\theta \in \overline{\Q_{p}}$ so that $\theta^{p} = \theta+1$
	and $a \in \Z$ so that $p \nmid a$,
	then the polynomial $g^{*}_{a, n}(z)$ has distinct $np$ roots in the ring $\Z_{p}[\theta]/np\Z_{p}[\theta]$.
	This statement is incorrect when $p \nmid n$
	because we need to extend the ring
	from $\Z_{p}[\theta]/np\Z_{p}[\theta] \simeq \F_{p^{p}}$ to $\F_{p^{p}}(\zeta_{n})$.
	Moreover, his proof is based on the claim that
	$a\theta+m$ ($0 \leq m \leq np-1$) form the distinct $np$ roots of $g^{*}_{a, n}(z)$ in $\overline{\F_{p}}$,
	which is wrong whenever $n > 1$.
	In order to remedy this error,
	we replace \cite[Lemma]{Junod_trace} by \cref{separable}.
}
For every $x \in \mathcal{O}^{\times}$,
$g^{*}_{x, n}(z)$ has distinct $np$ roots in $\mathcal{O}$.
\footnote{
	In view of the proof,
	it is sufficient to assume that $x^{p(p-1)} \not\equiv p(1-p^{-1})^{p-1} \bmod \langle \zeta_{n} \rangle$ (e.g. $x \in \Q_{p}^{\times}$). %and $x \neq 0$
	}
\end{lemma}
	
\begin{proof}
Since $g^{*}_{x, n}(z) = \prod_{m = 1}^{n} g^{*}_{\zeta_{n}^{m}x, 1}(z)$
and any distinct two factors $g^{*}_{\zeta_{n}^{m}x, 1}(z)$ ($1 \leq m \leq n$) have no common roots,
it is sufficient to prove that each factor $g^{*}_{\zeta_{n}^{m}x, 1}(z)$ has $p$ distinct roots in $\mathcal{O}^{\times}$,
i.e., $g^{*}_{\zeta_{n}^{m}x, 1}(z)$ is separable.
Indeed, 
the derivative of $g^{*}_{\zeta_{n}^{m}x, 1}(z)$
\[
	\frac{\partial}{\partial z}g^{*}_{\zeta_{n}^{m}x, 1}(z) = pz^{p-1}-1 %\in \mathcal{O}[z]
\]
has no common roots with $g^{*}_{\zeta_{n}^{m}x, 1}(z)$ itself
since $x \in \mathcal{O}^{\times}$.
\end{proof}

The following formula,
which is a corrected and generalized version of \cite[Theorem 3]{Junod_trace},
gives a representation of $b_{m}(x)$ for $x \in \mathcal{O}^{\times}$
as a trace-like sum over the roots of the polynomial $g^{*}_{x, n}(z)$.
%In fact, we an construct explicit counterexamples to \cref[Theorem 3]{Junod_trace}.
It is a key ingredient in the proof of \cref{main_2}
and originate from \cite[Lemme 4]{Barsky-Benzaghou},
where the latter was established decades after the studies \cite{Barsky_Bell,Radoux_Artin_Schreier,Radoux_Touchard,Radoux_survey} in 1970'.
%Here, we restrict us to the special case $r = 0$ just for simplicity.
%but we can state and prove it for general $r$ in a similar manner as $r = 0$.

\begin{proposition} \label{intermediate}
Let $m \in \Z_{\geq 0}$ and $n \in \Z_{\geq 1}$.
Then, for every $x \in \mathcal{O}^{\times}$,
\[
	-\frac{x^{p-1}}{n} \sum_{l = 0}^{n-1} \zeta_{n}^{l}
		\sum_{\substack{\Theta \in \overline{\Q_{p}} \\ \Theta^{p}-\Theta = \zeta_{n}^{l}x^{p}}}
		\frac{\Theta^{m}}{1-p\Theta^{p-1}}
		\sum_{k = 0}^{np-1} \prod_{j = 1}^{k} \left( \frac{\Theta}{x}+\frac{j}{x} \right)
	\in \Z_{p}[x]
\]
and it congruent to $b_{m}(x)$ in $\Z_{p}[x]/np\Z_{p}[x]$.
In particular, for every $(p-1)$-th root of unity $\xi \in \Z_{p}^{\times}$,
the following congruence holds in $\Z_{p}/np\Z_{p}$:
\[
	\xi^{m}b_{m}(\xi^{-1})
	\equiv -\frac{1}{n} \sum_{l = 0}^{n-1} \zeta_{n}^{l}
		\sum_{\substack{\theta \in \overline{\Q_{p}} \\ \theta^{p}-\theta = \zeta_{n}^{l}}}
		\frac{\theta^{m}}{1-p\theta^{p-1}} \sum_{k = 0}^{np-1} \prod_{j = 1}^{k} (\theta+j\xi)
		\bmod np\Z_{p}.
\]
\end{proposition}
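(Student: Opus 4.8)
The plan is to feed the rational-function congruences of \cref{rationality} and \cref{deformation} into a partial fraction decomposition over $\overline{\mathbb{Q}_{p}}$, using \cref{separable} to ensure that all poles are simple.

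Recall that $p$ is odd, so $\frac{np}{2}\mathbb{Z}_{p} = np\mathbb{Z}_{p}$. Specializing the second congruence of \cref{rationality} to $r = 0$ and substituting a fixed $x \in \mathcal{O}^{\times}$ therefore gives, coefficientwise in $\mathcal{O}[[z]]$,
\[
	g_{x, n}(z) \sum_{m \geq 0} b_{m}(x) z^{m}
	\equiv \sum_{k = 0}^{np-1} (xz)^{k} \prod_{j = k+1}^{np-1} (1-jz) \pmod{np\mathcal{O}}.
\]
The proof of \cref{deformation} in fact establishes this congruence modulo $np$ (indeed $1-(np-j)z \equiv 1+jz$ modulo $np$), so the right-hand side is congruent modulo $np\mathcal{O}$ to $N(z) := \sum_{k = 0}^{np-1} (xz)^{np-1-k} \prod_{j = 1}^{k} (1+jz)$, a polynomial in $\mathcal{O}[z]$ of degree $\leq np-1$. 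Since $g_{x, n}(0) = 1$, the series $g_{x, n}(z)$ is a unit in $\mathcal{O}[[z]]$; dividing, we obtain
\[
	b_{m}(x) \equiv [z^{m}]\!\left( \frac{N(z)}{g_{x, n}(z)} \right) \pmod{np\mathcal{O}},
	\qquad \frac{N(z)}{g_{x, n}(z)} \in \mathcal{O}[[z]].
\]

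Next I would evaluate $[z^{m}](N(z)/g_{x, n}(z))$ exactly over $\overline{\mathbb{Q}_{p}}$, where it agrees with the expansion just computed in $\mathcal{O}[[z]]$ by uniqueness. From $w^{n}-x^{np} = \prod_{l = 0}^{n-1}(w-\zeta_{n}^{l}x^{p})$ one gets $\tilde{g}_{x, n}(z) = \prod_{l = 0}^{n-1}(z^{p}-z-\zeta_{n}^{l}x^{p})$, so by \cref{separable} the polynomial $g_{x, n}(z)$ has exactly $np$ simple roots, namely the reciprocals of the roots $\Theta$ of $\tilde{g}_{x, n}$ (these are nonzero, as $\tilde{g}_{x, n}(0) = -x^{np} \neq 0$), and each such $\Theta$ satisfies $\Theta^{p}-\Theta = \zeta_{n}^{l}x^{p}$ for a unique $l \in \{0, \dots, n-1\}$. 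As $\deg N \leq np-1 < np = \deg g_{x, n}$, the partial fraction decomposition reads
\[
	\frac{N(z)}{g_{x, n}(z)}
	= \sum_{\tilde{g}_{x, n}(\Theta) = 0} \frac{N(\Theta^{-1})}{g_{x, n}'(\Theta^{-1})} \cdot \frac{1}{z-\Theta^{-1}},
	\qquad \frac{1}{z-\Theta^{-1}} = -\sum_{m \geq 0} \Theta^{m+1}z^{m},
\]
and a direct computation --- expanding $N(\Theta^{-1})$, and differentiating $g_{x, n}(z) = (1-z^{p-1})^{n}-(xz)^{np}$ while using $1-\Theta^{-(p-1)} = \zeta_{n}^{l}x^{p}\Theta^{-p}$ --- gives
\[
	N(\Theta^{-1}) = \Theta^{-(np-1)}x^{np-1}\sum_{k = 0}^{np-1}\prod_{j = 1}^{k}\!\left( \frac{\Theta}{x}+\frac{j}{x} \right),
	\qquad
	g_{x, n}'(\Theta^{-1}) = -n\zeta_{n}^{-l}x^{p(n-1)}\Theta^{-(np-2)}(p\Theta^{p-1}-1).
\]
Substituting these, the powers of $\Theta$ and of $x$ cancel, $-(p\Theta^{p-1}-1)^{-1} = (1-p\Theta^{p-1})^{-1}$, and grouping the roots $\Theta$ according to the associated $l$ yields precisely the asserted congruence. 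For the ``in particular'' part, I would take $x = a^{-1}$ with $a \in \mathbb{Z}_{p}^{\times}$ a $(p-1)$-st root of unity; then $x^{p-1} = 1$ and $x^{p} = a^{-1}$, and the substitution $\theta := a\Theta$ (a bijection on the relevant root sets) turns $\Theta^{p}-\Theta = \zeta_{n}^{l}x^{p}$ into $\theta^{p}-\theta = \zeta_{n}^{l}$, turns $\Theta^{p-1}$ into $\theta^{p-1}$, and turns $\Theta/x+j/x$ into $\theta+ja$; multiplying through by $a^{m}$ then gives the claim.

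The step that requires care is obtaining precision modulo $np\mathcal{O}$ rather than merely modulo $p\mathcal{O}$ when $p \mid n$: in the partial fraction expansion $g_{x, n}'(\Theta^{-1})$ differs from $n$ by a unit of $\mathcal{O}$, so dividing a congruence by it directly would cost a factor of $n$. This is why I perform the replacement of the numerator by $N(z)$ at the level of power series in $\mathcal{O}[[z]]$, where $g_{x, n}$ is a genuine unit, so that the partial fraction identity for $N(z)/g_{x, n}(z)$ holds \emph{exactly} over $\overline{\mathbb{Q}_{p}}$ and all loss of precision is confined to the inputs \cref{rationality} and \cref{deformation}; indeed $N(z)/g_{x, n}(z) \in \mathcal{O}[[z]]$ forces the double sum on the right-hand side to be divisible by $n$ in $\mathcal{O}$, so that side genuinely lies in $\mathcal{O}$. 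The remaining ingredients --- the factorization of $\tilde{g}_{x, n}$, the simple-root count via \cref{separable}, and the algebra identifying $N(\Theta^{-1})$ and $g_{x, n}'(\Theta^{-1})$ --- are routine.
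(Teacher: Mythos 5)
Your proof is correct and follows essentially the same route as the paper: convert the congruence from \cref{rationality} and \cref{deformation} into an exact rational function $N(z)/g_{x,n}(z)$ over $\overline{\mathbb{Q}_{p}}$, apply partial fractions using \cref{separable}, compute the residues (your formulas for $N(\Theta^{-1})$ and $g_{x,n}'(\Theta^{-1})$ check out), and substitute $\theta = a\Theta$ for the special case. Your explicit attention to keeping the factor $(1-p\Theta^{p-1})^{-1}$ and confining all loss of $np$-adic precision to the power-series inputs is exactly the point where Junod's original argument fails, as the paper notes.
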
	
	
\begin{proof}
Define a rational function in $\overline{\Q_{p}}(z)$ by
\[
	F_{x, n}(z)
	:= \frac{1}{g_{x, n}(z)} \sum_{k = 0}^{np-1} (xz)^{np-k-1} \prod_{j = 1}^{k} (1+jz).
\]
Then,
%$F_{x, n}(z)$ has a (simple) zero at $z = \infty$. %$\in \mathbb{P}^{1}(\overline{\Q_{p}})$,
%Not used
\cref{separable} implies that
$F_{x, n}(z)$ has a partial fractional decomposition as follows
\[
	F_{x, n}(z)
	= \sum_{\substack{\eta \in \overline{\Q_{p}} \\ g_{x, n}(\eta) = 0}} \frac{\rho_{x, n}(\eta)}{z-\eta},
\]
where $\rho_{x, n}(\eta) := \left. (z-\eta)F_{x, n}(z) \right|_{z = \eta}$ (cf.\  the proof of \cref{GF_Bell_Mezo}).
In particular, we obtain
\[
	\left. \frac{1}{m!} \frac{\partial^{m}}{\partial z^{m}} F_{x, n}(z) \right|_{z = 0}
	%= (-1)^{m} \sum_{\substack{\eta \in \overline{\Q_{p}} \\ g_{x, n}(\eta) = 0}} \frac{\rho_{x, n}(\eta)}{(-\eta)^{m+1}}
	= - \sum_{\substack{\eta \in \overline{\Q_{p}} \\ g_{x, n}(\eta) = 0}} \frac{\rho_{x, n}(\eta)}{\eta^{m+1}}.
\]
Moreover,
for every root $\eta$ of $g_{x, n}(z)$,
we have
\[
	\frac{\partial}{\partial z}g_{x, n}(\eta)
	= -n(p-1)(1-\eta^{p-1})^{n-1}\eta^{p-2} - npx(x\eta)^{np-1}
	%= -n(p-1) \frac{(x\eta)^{np}}{1-\eta^{p-1}} \eta^{p-2} - npx(x\eta)^{np-1} \\
		= -nx(x\eta)^{np-1}\frac{\eta^{p-1}-p}{\eta^{p-1}-1},
\]
and hence
\[
	\rho_{x, n}(\eta)
	%= \left. (z-\eta)F_{x, n}(z) \right|_{z = \eta}
	= \frac{1}{\frac{\partial}{\partial z} g_{x, n}(\eta)} \sum_{k = 0}^{np-1} (x\eta)^{np-k-1} \prod_{j = 1}^{k} (1+j\eta)
	= -\frac{\eta^{p-1}-1}{nx(\eta^{p-1}-p)} \sum_{k = 0}^{np-1} \prod_{j = 1}^{k} \left( \frac{1}{x\eta}+\frac{j}{x} \right).
\]
By combining the above calculations and rewriting $\eta^{-1}$ to $\Theta$,
we obtain the following equality:
\begin{align*}
	\left. \frac{1}{m!} \frac{\partial^{m}}{\partial z^{m}} F_{x, n}(z) \right|_{z = 0}
	&= \frac{1}{nx} \sum_{\substack{\eta \in \overline{\Q_{p}} \\ g_{x, n}(\eta) = 0}}
		\frac{\eta^{p-1}-1}{\eta^{m+1}(\eta^{p-1}-p)}
		\sum_{k = 0}^{np-1} \prod_{j = 1}^{k} \left( \frac{1}{x\eta} + \frac{j}{x} \right) \\
	&= \frac{1}{nx} \sum_{\substack{\Theta \in \overline{\Q_{p}} \\ g^{*}_{x, n}(\Theta) = 0}}
		\frac{\Theta^{m}(\Theta-\Theta^{p})}{1-p\Theta^{p-1}}
		\sum_{k = 0}^{np-1}\prod_{j = 1}^{k} \left( \frac{\Theta}{x} + \frac{j}{x} \right) \\
	&= -\frac{x^{p-1}}{n} \sum_{l = 0}^{n-1} \zeta_{n}^{l}
		\sum_{\substack{\Theta \in \overline{\Q_{p}} \\ \Theta^{p}-\Theta = \zeta_{n}^{l}x^{p}}}
		\frac{\Theta^{m}}{1-p\Theta^{p-1}}
		\sum_{k = 0}^{np-1} \prod_{j = 1}^{k} \left( \frac{\Theta}{x}+\frac{j}{x} \right).
\end{align*}
Since $x \in \mathcal{O}^{\times}$,
the leftmost side belongs to $\Z_{p}[x]$,
%i.e., the formal power series expansion of $F_{x, n}(z)$ around $z = 0$ lies in $\Z[x][[z]]$
which implies the first statement.
On the other hand,
\cref{rationality} implies that
\[
	b_{m}(x)
	\equiv \left. \frac{1}{m!} \frac{\partial^{m}}{\partial z^{m}} F_{x, n}(z) \right|_{z = 0} \bmod np\Z_{p}[x].
\]
Therefore, we obtain the second statement.
\footnote{
	In this step, Junod made a crucial error
	to substitute the congruence
	\[
		\frac{\partial}{\partial z}g_{x, n}(z)
			= -nx(x\eta)^{np-1}\frac{\eta^{p-1}-p}{\eta^{p-1}-1}
				\equiv -nx(x\eta)^{np-1}\frac{\eta^{p-1}}{\eta^{p-1}-1} \bmod np\Z[x]
	\]
	to $\rho_{x, n}(\eta) = (\frac{\partial}{\partial z} g_{x, n}(\eta))^{-1} \sum_{k = 0}^{np-1} (x\eta)^{np-k-1} \prod_{j = 1}^{k} (1+j\eta)$.
	Since $(\partial/\partial z)g_{x, n}(z) \in \mathcal{O}^{\times}$ only if $p \nmid n$,
	the above substitution cannot be justified.
	%Moreover, if we substitute the above congruence to the formula
	%\[
	%	\left. \frac{1}{m!} \frac{\partial^{m}}{\partial z^{m}} F_{x, n}(z) \right|_{z = 0}
	%	= (-1)^{m} \sum_{\substack{\eta \in \overline{\Q_{p}} \\ g_{x, n}(\eta) = 0}}
	%		\frac{1}{(-\eta)^{m+1} \frac{\partial}{\partial z} g_{x, n}(\eta)}
	%		\sum_{k = 0}^{np-1} (x\eta)^{np-k-1} \prod_{j = 1}^{k} (1+j\eta),
	%\]
	%then we obtain a congruence $\bmod p$ by cancelling $n$ from both of the numerator and denominator.
}
For the last statement,
it is sufficient to note that,
$\theta^{p}-\theta = \zeta_{n}^{l}$ if and only if
$(\xi^{-1}\theta)^{p}-(\xi^{-1}\theta) = \zeta_{n}^{l}(\xi^{-1})^{p}$.
\end{proof}

\begin{remark} \label{transcendence_F}
The above proof relies on \cref{rationality},
which lifts the ordinary generating series $F(x, 0, z) \in (\Z[x]/np\Z[x])[[z]]$ of $(b_{m}(x) \bmod np\Z[x])_{m}$
to a rational function $F_{x, n}(z) \in \Q_{p}(x, z)$.
Such a rational approximation goes back to Barsky's pioneering work \cite{Barsky_Bell}.
Note that $F(x, 0, z)$ itself is highly transcendental
because even a specialization $F(1, 0, z) = e^{-1}(1-z)^{-1}{}_{1}F_{1}(\begin{smallmatrix} -z^{-1} \\ 1-z^{-1} \end{smallmatrix} \mid 1)$ %$=\sum_{n \geq 0} \prod_{j = 1}^{n} z/(1-jz)$
satisfies no algebraic differential equations over $\C[[z]]$ \cite[Theorem 3.5]{Klazar}.
\end{remark}

In the proof of \cref{trace_formula},
we shall use the following lemma,
which is a part of \cite[Proposition 4]{Junod_trace}.
The author believes that the following proof is more natural than \cite{Junod_trace} involving more technical calculations.
Additionally, the author expects that
our proof would lead us to deeper understanding of \cref{trace_formula}
because it relates $\tau_{p}(a) := \sum_{j = 1}^{p-1} jp^{ja-1}$ more directly to
the cyclotomic polynomial $(x^{p}-1)/(x-1)$ than \cite{Junod_trace}.

\begin{lemma} \label{inverse}
Suppose that $a \not\equiv 0 \bmod p$.
Then, it holds that
\[
	\tau_{p}(a)(p^{a}-1) \equiv 1 \bmod \frac{p^{p}-1}{p-1}.
\]
In particular, if $\eta \in \F_{p^{p}}$ has norm $1$,
then $\eta^{\tau_{p}(a)}$ is the unique $(p^{a}-1)$-th root of $\eta$ in $\F_{p^{p}}$.
\end{lemma}

\begin{proof}
By calculating the derivative of $x^{ap}-1 \in \Z[x]$ in two ways,
we obtain
\[
	 \left( \frac{x^{ap}-1}{x^{a}-1} \right)' (x^{a}-1)
		+ \frac{x^{ap}-1}{x^{a}-1} \cdot ax^{a-1}
			= apx^{ap-1}.
\]
%Since the roots of $x^{N}-1$ in $\overline{\Q}$ are the distinct $N$-th roots of unity,
Since $a \not\equiv 0 \bmod p$, %$and only if $a \not\equiv 0 \bmod p$
there exists a polynomial $\psi(x) \in \Z[x]$ such that
$(x^{ap}-1)/(x^{a}-1) = \psi(x)(x^{p}-1)/(x-1)$,
and hence
\[
	 \left. \left( \frac{x^{ap}-1}{x^{a}-1} \right)'\right|_{x = p} (p^{a}-1)
		+ ap^{a-1}\psi(p) \cdot \frac{p^{p}-1}{p-1}
			= a \cdot (p^{p})^{a}.
\]
By taking modulo $(p^{p}-1)/(p-1)$, we obtain the desired congruence.
\end{proof}

%%%%%
\begin{proof} [Proof of \cref{trace_formula}]
As noted in \cref{r = 0,p = 2},
we may assume that $\lambda = 0$ and $p \geq 3$.
%n = 1
By applying \cref{intermediate} for $n = 1$
and \cref{inverse},
we obtain a congruence in $\Z_{p}/p\Z_{p} \simeq \F_{p}$
\begin{align*}
	a^{m}b_{m}(a^{-1})
	&\equiv -\sum_{\substack{\theta \in \F_{p^{p}} \\ g^{*}_{1, 1}(\theta) = 0}}
		\theta^{m} \sum_{k = 0}^{p-1} \prod_{j = 1}^{k} (\theta+ja)
	\equiv -\sum_{\substack{\theta \in \F_{p^{p}} \\ \theta^{p} = \theta+1}}
		\theta^{m-1} \sum_{k = 0}^{p-1} \prod_{j = 0}^{k} \theta^{p^{ja}} \\
	&\equiv -\sum_{\substack{\theta \in \F_{p^{p}} \\ \theta^{p} = \theta+1}}
		\theta^{m-1} \sum_{k = 0}^{p-1} \theta^{\frac{p^{a(k+1)}-1}{p^{a}-1}}
	\equiv -\sum_{\substack{\theta \in \F_{p^{p}} \\ \theta^{p} = \theta+1}}
		\theta^{m-1-\tau_{p}(a)} \sum_{k = 0}^{p-1} (\theta^{p^{a(k+1)}})^{\tau_{p}(a)} \bmod p.
\end{align*}
%n = 1 is used in the second equality
Since $a \not\equiv 0 \bmod p$,
$\theta^{p^{a(k+1)}}$ ($0 \leq k \leq p-1$) form the roots of the polynomial $g^{*}_{1,1}(z) = z^{p}-z-1$,
and hence $\sum_{k = 0}^{p-1} (\theta^{p^{a(k+1)}})^{\tau_{p}(a)} = \Tr(\theta^{\tau_{p}(a)}) \in \F_{p}$.
%which is independent of the choice of a root $\theta$ of $g^{*}_{1,1}(z)$.
This completes the proof.
\end{proof}

%%%
%%% quadratic fields
%%%
\section{Proof of \cref{main_1}}

In this section, we complete the proof of \cref{main_1}.
Let $p$ be an odd prime.
Then, since $\tau_{p}(a) \equiv (p-1)/2 \bmod p-1$ and $\Tr(\theta^{-1}) = -1$,
\cref{trace_formula} for $m = \tau_{p}(a)$ implies that
\begin{equation} \label{Bell_trace}
	\left( \frac{a}{p} \right)_{2} \Tr(\theta^{\tau_{p}(a)})
	\equiv b_{\tau_{p}(a)}(a^{-1}) \bmod p,
\end{equation}
where $(a/p)_{2}$ denotes the quadratic residue symbol modulo $p$.
On the other hand,
by \cite{Mordell_factorial,Chowla},
we have
\[
	\left( \frac{p-1}{2} \right)!
	\equiv \begin{cases}
	(-1)^{\frac{h(p)+1}{2}}t_{p} \bmod p & \text{if $p \equiv 1 \bmod 4$}, \\
	(-1)^{\frac{h(-p)+1}{2}} \bmod p & \text{if $p \equiv -1 \bmod 4$ and $p > 3$}.
	\end{cases}
\]
Therefore, \cref{main_1} follows from the following congruence.
%In this section, we prove the following

\begin{theorem} \label{Bell_factorial}
Let $p$ be an odd prime
and $a \in \Z_{\geq 1}$ such that $p \nmid a$.
Then, it holds that
\[
	b_{\tau_{p}(a)}(a^{-1})
	\equiv \left( \frac{-2}{p} \right)_{2} \left( \frac{p-1}{2} \right)! \bmod p.
\]
\end{theorem}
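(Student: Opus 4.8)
The plan is to evaluate the $p\times p$ Hankel determinant
\[
	D:=\det\bigl(b_{i+j}(a^{-1})\bigr)_{0\le i,j\le p-1}\in\mathbb{F}_p
\]
in two independent ways and compare. For the first, I would invoke Radoux's Hankel determinant formula for the Bell polynomials \cite{Radoux_Hankel}, i.e. the identity $\det\bigl(b_{i+j}(x)\bigr)_{0\le i,j\le n}=x^{\binom{n+1}{2}}\prod_{k=0}^{n}k!$ in $\mathbb{Z}[x]$. Taking $n=p-1$, specializing at $x=a^{-1}$, reducing modulo $p$, and using $a^{-\binom{p}{2}}=\bigl(a^{-(p-1)/2}\bigr)^{p}\equiv a^{-(p-1)/2}\equiv\left(\frac{a}{p}\right)\pmod{p}$, this yields
\[
	D\equiv\left(\frac{a}{p}\right)\prod_{k=0}^{p-1}k!\pmod{p}.
\]

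For the second evaluation I would use the spectral decomposition of $\bigl(b_m(a^{-1})\bmod p\bigr)_{m\ge0}$. By the Touchard-type congruence \cref{Touchard_1} (noting $a^{-p}\equiv a^{-1}$) this sequence satisfies a linear recurrence with characteristic polynomial the Artin--Schreier polynomial $f(t):=t^{p}-t-a^{-1}\in\mathbb{F}_p[t]$, whose $p$ roots in $\overline{\mathbb{F}_p}$ are exactly the distinct elements $\alpha_i:=\theta_i/a$ with $\theta_i:=\theta^{p^{i}}$ ($0\le i\le p-1$) the conjugates of $\theta$. Moreover, \cref{trace_formula} for $\lambda=0$, read with a general exponent $m$ and with $\mathrm{Tr}(\theta^{m-1-\tau_p(a)})=\sum_{i}\theta_i^{\,m-1-\tau_p(a)}$, gives the closed form
\[
	b_m(a^{-1})\equiv-\mathrm{Tr}(\theta^{\tau_p(a)})\sum_{i=0}^{p-1}\theta_i^{\,-1-\tau_p(a)}\alpha_i^{\,m}=:\sum_{i=0}^{p-1}\lambda_i\alpha_i^{\,m}\pmod{p}
\]
for every $m\ge0$. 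Hence the Hankel matrix factors as $\bigl(b_{i+j}(a^{-1})\bigr)_{i,j}=M\,\mathrm{diag}(\lambda_0,\dots,\lambda_{p-1})\,M^{T}$ with $M=(\alpha_i^{\,j})_{0\le i,j\le p-1}$ a Vandermonde matrix, so $D=\bigl(\prod_{i<j}(\alpha_i-\alpha_j)\bigr)^{2}\prod_i\lambda_i=\mathrm{disc}(f)\cdot\prod_i\lambda_i$. Since $f'(t)\equiv-1$ in characteristic $p$, I would compute $\mathrm{disc}(f)=(-1)^{\binom{p}{2}}\prod_i f'(\alpha_i)=(-1)^{\binom{p}{2}}(-1)^{p}=(-1)^{(p+1)/2}$; and since $\mathrm{Tr}(\theta^{\tau_p(a)})\in\mathbb{F}_p$ and $\prod_i\theta_i=N_{\mathbb{F}_{p^{p}}/\mathbb{F}_p}(\theta)=1$ (it equals $(-1)^{p+1}$ times the constant term $-1$ of $t^{p}-t-1$), I would get $\prod_i\lambda_i=(-1)^{p}\mathrm{Tr}(\theta^{\tau_p(a)})^{p}\bigl(\prod_i\theta_i\bigr)^{-1-\tau_p(a)}=-\mathrm{Tr}(\theta^{\tau_p(a)})$, whence $D\equiv(-1)^{(p-1)/2}\mathrm{Tr}(\theta^{\tau_p(a)})\pmod{p}$.

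Comparing the two evaluations of $D$ and using \cref{Bell_trace} in the form $b_{\tau_p(a)}(a^{-1})\equiv\left(\frac{a}{p}\right)\mathrm{Tr}(\theta^{\tau_p(a)})\pmod{p}$, I would obtain $b_{\tau_p(a)}(a^{-1})\equiv(-1)^{(p-1)/2}\prod_{k=0}^{p-1}k!\pmod{p}$. It then remains to evaluate $\prod_{k=0}^{p-1}k!$ modulo $p$: rewriting $\prod_{k=1}^{p-1}k!=\prod_{j=1}^{p-1}j^{\,p-j}$ and pairing the base $j$ with the base $p-j$ (so each pair contributes $j^{\,p-j}(p-j)^{\,j}\equiv(-1)^{j}j^{\,p}\equiv(-1)^{j}j$) collapses the product to $\prod_{j=1}^{(p-1)/2}(-1)^{j}j=(-1)^{(p^{2}-1)/8}\left(\frac{p-1}{2}\right)!=\left(\frac{2}{p}\right)\left(\frac{p-1}{2}\right)!$ by the second supplement to quadratic reciprocity. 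This gives $b_{\tau_p(a)}(a^{-1})\equiv(-1)^{(p-1)/2}\left(\frac{2}{p}\right)\left(\frac{p-1}{2}\right)!=\left(\frac{-2}{p}\right)\left(\frac{p-1}{2}\right)!\pmod{p}$, as claimed.

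I expect the delicate (rather than deep) points to be the sign bookkeeping in the discriminant and in $\prod_i\lambda_i$ — a careless evaluation of $(-1)^{\binom{p}{2}}$ or of the $(-1)^{p}$ factors would corrupt the final Legendre symbol — together with the verification that the closed form from \cref{trace_formula} really holds for every $m\ge0$, including small values where $m-1-\tau_p(a)<0$ and the exponents must be read in $\mathbb{F}_{p^{p}}^{\times}$, so that the Hankel matrix genuinely factors through the Vandermonde matrix. One must also match the index ranges in Radoux's identity and in the elementary evaluation of $\prod_{k}k!$ to the $p\times p$ format of $D$. I anticipate the sign in the Artin--Schreier discriminant to be the single most error-prone step.
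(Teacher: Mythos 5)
Your proof is correct, and the second evaluation of the Hankel determinant takes a genuinely different route from the paper. Both arguments pit Radoux's formula $\det(b_{i+j}(x))_{0\le i,j\le p-1}=x^{p(p-1)/2}\prod_{j=0}^{p-1}j!$ against information extracted from \cref{trace_formula}, but the mechanisms differ. The paper shifts the Hankel matrix to $\mathbb{B}_p^{(\tau_p(a))}(a^{-1})$, proves by induction from the Touchard congruence that $\det\mathbb{B}_p^{(m)}(x)\equiv x^{pm}\det\mathbb{B}_p^{(0)}(x)$, and then observes via \cref{trace_formula} and \cref{trace_calculus} that the shifted matrix is supported only on the $(0,0)$ entry and the antidiagonal $i+j=p$, so its determinant collapses to $(-1)^{(p-1)/2}b_{\tau_p(a)}(a^{-1})$. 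You instead keep the unshifted matrix and use the full closed form $b_m(a^{-1})\equiv\sum_i\lambda_i\alpha_i^{m}$ supplied by \cref{trace_formula}, factor the Hankel matrix through the Vandermonde matrix of the roots of $t^p-t-a^{-1}$, and compute the Artin--Schreier discriminant $(-1)^{(p+1)/2}$ and the product $\prod_i\lambda_i=-\mathrm{Tr}(\theta^{\tau_p(a)})$ (using $N(\theta)=1$). Your route dispenses entirely with the shift induction and the vanishing pattern, at the cost of the discriminant and norm bookkeeping; I checked your signs ($\mathrm{disc}=(-1)^{(p+1)/2}$, $\prod_i\lambda_i=-\mathrm{Tr}(\theta^{\tau_p(a)})$, hence $D\equiv(-1)^{(p-1)/2}\mathrm{Tr}(\theta^{\tau_p(a)})$) and they are right, e.g. for $p=3$, $a=1$ one gets $D=2\equiv-1$ both ways. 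The only blemish is notational: with $M=(\alpha_i^{\,j})$ indexed as you wrote it, the factorization should read $M^{T}\Lambda M$ rather than $M\Lambda M^{T}$, which is immaterial for the determinant. Your elementary evaluation of $\prod_{j=0}^{p-1}j!\equiv\left(\frac{2}{p}\right)\left(\frac{p-1}{2}\right)!$ also replaces the paper's citation of Radoux with a self-contained pairing argument.
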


For $a = 1$, the above congruence was already obtained by Radoux
in \cite[p.\ 881]{Radoux_Artin_Schreier} and \cite[(13)]{Radoux_Hankel},
whose proof based on the following Hankel determinant formula.

\begin{lemma} [{\cite[(12)]{Radoux_Hankel}, \cite[Theorem 1]{Ehrenborg}, see also \cite{Aigner}}] \label{Sylvester_type}
Let $n \in \Z_{\geq 1}$,
and set $n \times n$ matrix $\mathbb{B}_{n}(x) := (b_{i+j}(x))_{0 \leq i, j \leq n-1}$.
Then, we have
\[
	\det\mathbb{B}_{n}(x) = \left( \prod_{j = 0}^{n-1} j! \right) x^{\frac{n(n-1)}{2}}.
\]
In particular,
if $n = p$ is an odd prime and $a \in \Z$ not divisible by $p$,
then it holds that
\[
	\det\mathbb{B}_{p}(a^{-1})
	%\equiv a^{\frac{p-1}{2}} \prod_{j = 0}^{p-1} j! \bmod p
	\equiv \left( \frac{2a}{p} \right)_{2} \left( \frac{p-1}{2} \right)! \bmod p.
\]
\end{lemma}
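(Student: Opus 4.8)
The plan is to read $\mathbb{B}_{n}(x)=(b_{i+j}(x))_{0\le i,j\le n-1}$ as a Gram matrix and to diagonalize it against the Charlier orthogonal polynomials. Introduce the $\mathbb{Z}[x]$-linear functional $L\colon\mathbb{Z}[x][u]\to\mathbb{Z}[x]$, $L(u^{m}):=b_{m}(x)$, and the symmetric bilinear form $\langle f,g\rangle:=L(fg)$ on $\mathbb{Z}[x][u]$; by construction the Gram matrix of the basis $1,u,\dots,u^{n-1}$ is exactly $\mathbb{B}_{n}(x)$. Differentiating the generating identity $\sum_{m\ge0}b_{m}(x)z^{m}/m!=e^{x(e^{z}-1)}$ gives the recurrence $b_{m+1}(x)=x\sum_{j=0}^{m}\binom{m}{j}b_{j}(x)$, which by $\mathbb{Z}[x]$-linearity is the functional equation
\[
	L\bigl(uf(u)\bigr)=x\,L\bigl(f(u+1)\bigr),\qquad f\in\mathbb{Z}[x][u].
\]
This is the only nontrivial input; everything after it is linear algebra.

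Next I would introduce the monic polynomials $P_{-1}:=0$, $P_{0}:=1$, $P_{m+1}(u):=(u-m-x)P_{m}(u)-mx\,P_{m-1}(u)$ (the monic Charlier polynomials of parameter $x$), so that $P_{m}$ is monic of degree $m$, and prove the orthogonality relations
\[
	L(u^{j}P_{m})=0\ \ (0\le j<m),\qquad L(u^{m}P_{m})=m!\,x^{m}.
\]
These follow by a simultaneous induction on $m$: the three-term recurrence reduces $L(u^{j}P_{m+1})$ to $L(u^{j+1}P_{m}),L(u^{j}P_{m}),L(u^{j}P_{m-1})$, and the one value on the super-diagonal, $L(u^{m+1}P_{m})$, is handled by the functional equation above (equivalently, by the auxiliary difference relation $P_{m}(u+1)-P_{m}(u)=m\,P_{m-1}(u)$, itself a one-line induction). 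Granting this, let $C=(c_{ij})_{0\le i,j\le n-1}$ be the lower‑unitriangular matrix with $P_{i}(u)=\sum_{j}c_{ij}u^{j}$; then $C\,\mathbb{B}_{n}(x)\,C^{\mathsf{T}}$ is the Gram matrix of $P_{0},\dots,P_{n-1}$, which by orthogonality equals $\operatorname{diag}(0!\,x^{0},\dots,(n-1)!\,x^{n-1})$. Since $\det C=1$, taking determinants gives
\[
	\det\mathbb{B}_{n}(x)=\prod_{m=0}^{n-1}m!\,x^{m}=\Bigl(\prod_{m=0}^{n-1}m!\Bigr)x^{n(n-1)/2}
\]
as an identity in $\mathbb{Z}[x]$. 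A shorter variant underlying the cited references: both sides are polynomials in $x$, so one may assume $x\in\mathbb{Z}_{\ge1}$, where Dobinski's formula $b_{m}(x)=e^{-x}\sum_{k\ge0}x^{k}k^{m}/k!$ exhibits $b_{m}(x)$ as the $m$-th moment of the Poisson distribution of mean $x$, and the classical evaluation of a moment Hankel determinant as $\prod_{m=0}^{n-1}L(P_{m}^{2})$ with the $P_{m}$ the (Charlier) monic orthogonal polynomials finishes it; equivalently one may expand by Andréief's identity and evaluate the resulting discrete Selberg‑type sum, as in Ehrenborg.

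The main obstacle, in the self-contained version, is precisely the orthogonality induction: one has to track the super-diagonal term carefully, and one must exhibit the family $(P_{m})$ explicitly rather than obtaining it by Gram–Schmidt, since a Gram–Schmidt construction would presuppose the non-vanishing of the very Hankel minors $\det\mathbb{B}_{m}(x)$ that are being computed — a circularity that the explicit recurrence for $P_{m}$ avoids.

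Finally, for the ``in particular'' clause, specialize to $n=p$ and $x=a^{-1}$: the displayed identity gives $\det\mathbb{B}_{p}(a^{-1})=\bigl(\prod_{j=0}^{p-1}j!\bigr)a^{-p(p-1)/2}$, and since $p$ is odd, Fermat's little theorem gives $a^{-p(p-1)/2}=\bigl(a^{-(p-1)/2}\bigr)^{p}\equiv a^{-(p-1)/2}\pmod p$, which is $\equiv a^{(p-1)/2}\pmod p$ because $a^{(p-1)/2}\equiv\pm1$ is its own inverse modulo $p$; hence $\det\mathbb{B}_{p}(a^{-1})\equiv a^{(p-1)/2}\prod_{j=0}^{p-1}j!\pmod p$.
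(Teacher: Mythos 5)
Your proposal is correct; note that the paper itself offers no proof of this lemma at all — it is quoted from Radoux and Ehrenborg — so you are supplying an argument where the paper only cites. Your route (reading $\mathbb{B}_{n}(x)$ as the Gram matrix of $1,u,\dots,u^{n-1}$ for the Poisson/Bell moment functional $L(u^{m})=b_{m}(x)$, diagonalizing against the monic Charlier polynomials via a unitriangular change of basis, and reading off $\det\mathbb{B}_{n}(x)=\prod_{m=0}^{n-1}m!\,x^{m}$) is the classical orthogonal-polynomial proof and is essentially what underlies the cited references; the functional equation $L(uf(u))=xL(f(u+1))$ and the difference relation $P_{m}(u+1)-P_{m}(u)=mP_{m-1}(u)$ are both correct, as is the final specialization $a^{-p(p-1)/2}\equiv a^{(p-1)/2}\pmod p$. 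The only place where your sketch is thinner than it should be is the orthogonality induction: to evaluate the super-diagonal entry $L(u^{m+1}P_{m})$ via the functional equation you expand $L\bigl((u+1)^{m}P_{m}(u+1)\bigr)$ and this produces the term $L(u^{m}P_{m-1})$, i.e.\ the super-diagonal value at the \emph{previous} level, which is not covered by the hypothesis ``$L(u^{j}P_{m-1})=0$ for $j<m-1$ and $L(u^{m-1}P_{m-1})=(m-1)!\,x^{m-1}$'' as you state it. The fix is routine — strengthen the induction hypothesis to carry the quantity $s_{m}:=L(u^{m+1}P_{m})$ along (one finds $s_{m}=m!\,x^{m+1}+m\cdot m!\,x^{m}+mx\,s_{m-1}$, $s_{0}=x$), after which every term in the three-term-recurrence computation of $L(u^{j}P_{m+1})$ is determined and the claimed values $L(u^{m}P_{m})=m!\,x^{m}$ follow; alternatively your Dobinski/positive-integer-$x$ variant sidesteps the bookkeeping entirely by invoking the known Charlier orthogonality for the Poisson weight and polynomiality in $x$.
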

%where the last congruence is well-known (see e.g. \cite{Radoux_Hankel}) and can be checked in a straight-forward manner.

We also use the following lemma,
whose proof is given in the appendix.

\begin{lemma} [{cf.\ \cite[Lemme 3]{Barsky-Benzaghou}}] \label{trace_calculus}
Let $p$ be an odd prime,
and $n \in \Z$.
Then, $\Tr(\theta^{n})$ coincides with $-1$ times the coefficient of $\theta^{p-1}$
in the $\F_{p}$-linear representation of $\theta^{n}$
for the basis consisting of $\theta^{0} = 1, \theta, ..., \theta^{p-1}$.
In particular, we have
\[
	\Tr(\theta^{0})
	= \Tr(\theta^{1})
	= \cdots
	= \Tr(\theta^{p-2})
	= \Tr(\theta^{p})
	= \Tr(\theta^{p+1})
	= \cdots
	= \Tr(\theta^{2p-3}) = 0
\]
and
\[
	\Tr(\theta^{-1})
	= \Tr(\theta^{p-1}) = -1. 	
\]
\end{lemma}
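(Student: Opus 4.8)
The plan is to exploit the explicit description of the Galois conjugates of $\theta$. Since $\theta^{p} = \theta+1$ and $i^{p} = i$ in $\mathbb{F}_{p}$, an immediate induction gives $\theta^{p^{i}} = \theta+i$ for $0 \le i \le p-1$; hence the $p$ conjugates of $\theta$ over $\mathbb{F}_{p}$ are exactly $\theta, \theta+1, \ldots, \theta+(p-1)$, the minimal polynomial of $\theta$ is the irreducible Artin-Schreier polynomial $t^{p}-t-1$, and $\{1, \theta, \ldots, \theta^{p-1}\}$ is indeed an $\mathbb{F}_{p}$-basis of $\mathbb{F}_{p^{p}}$. In particular $\mathrm{Tr}(\theta^{n}) = \sum_{i=0}^{p-1}(\theta+i)^{n}$ for every $n$. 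Because $\mathrm{Tr}$ is $\mathbb{F}_{p}$-linear, the asserted identity $\mathrm{Tr}(\theta^{n}) = -c_{p-1}$, where $\theta^{n} = \sum_{k=0}^{p-1} c_{k}\theta^{k}$, reduces to the single family of base cases $\mathrm{Tr}(\theta^{k}) = -\delta_{k,p-1}$ for $0 \le k \le p-1$.

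To establish these base cases I would expand by the binomial theorem,
\[
	\mathrm{Tr}(\theta^{k}) = \sum_{i=0}^{p-1}(\theta+i)^{k} = \sum_{j=0}^{k}\binom{k}{j}\theta^{j}\Bigl(\sum_{i=0}^{p-1} i^{k-j}\Bigr),
\]
and invoke the classical power-sum evaluation $\sum_{i \in \mathbb{F}_{p}} i^{m} \equiv -1 \pmod{p}$ when $m \ge 1$ and $(p-1) \mid m$, and $\equiv 0$ otherwise (including $m = 0$). For $0 \le k \le p-1$ the exponent $k-j$ runs through $0, 1, \ldots, k \le p-1$, so the inner sum is nonzero only in the single case $k = p-1$, $j = 0$, where it equals $-1$; this yields $\mathrm{Tr}(\theta^{k}) = 0$ for $0 \le k \le p-2$ and $\mathrm{Tr}(\theta^{p-1}) = -1$, as required. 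Alternatively, one may feed the elementary symmetric functions of $t^{p}-t-1$---all of which vanish except $e_{p-1} = -1$ and $e_{p} = 1$---into Newton's identities to obtain the same power sums $p_{k} = \mathrm{Tr}(\theta^{k})$, and I would mention this as a cross-check.

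With the general formula $\mathrm{Tr}(\theta^{n}) = -c_{p-1}$ in hand, the two explicit lists follow by computing the top coordinate $c_{p-1}$. For $0 \le k \le p-2$ the element $\theta^{k}$ is itself a basis vector, so $c_{p-1} = 0$, while for $k = p-1$ we get $c_{p-1} = 1$; likewise $\theta^{-1} = \theta^{p-1}-1$ (from $\theta(\theta^{p-1}-1) = \theta^{p}-\theta = 1$) gives $\mathrm{Tr}(\theta^{-1}) = -1$. For $p \le k \le 2p-3$ I would reduce using $\theta^{p} = \theta+1$: when $k = p$, the element $\theta^{p} = \theta+1$ has no $\theta^{p-1}$ component, and when $p+1 \le k \le 2p-3$ one writes $\theta^{k} = \theta^{k-p}(\theta+1) = \theta^{k-p+1}+\theta^{k-p}$ with $1 \le k-p \le p-3$, so both exponents stay $\le p-2$ and again $c_{p-1} = 0$; hence $\mathrm{Tr}(\theta^{k}) = 0$ throughout that range. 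The main obstacle is really the power-sum computation at the heart of the base cases---pinning down that, within the window $0 \le k \le p-1$, the divisibility condition $(p-1) \mid (k-j)$ is met by exactly one pair $(k,j)$---after which everything is bookkeeping in the fixed basis.
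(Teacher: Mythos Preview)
Your argument is correct and follows essentially the same route as the paper's appendix: both use that the Galois conjugates of $\theta$ are $\theta+m$ ($0\le m\le p-1$), expand via the binomial theorem, and reduce everything to the classical power-sum congruence $\sum_{m=0}^{p-1} m^{j}\equiv -\,[\,j>0,\ (p-1)\mid j\,]\pmod p$. The only difference is organizational: the paper first writes $\theta^{n}=\sum_{j}a_{n,j}\theta^{j}$, applies Frobenius to obtain $\theta^{np^{m}}=\sum_{j}a_{n,j}(\theta+m)^{j}$, sums over $m$, and then reads off the constant coefficient (using that the trace lies in $\mathbb{F}_{p}$) to get $\mathrm{Tr}(\theta^{n})=\sum_{j}a_{n,j}\sum_{m}m^{j}=-a_{n,p-1}$ in one stroke; you instead invoke $\mathbb{F}_{p}$-linearity of $\mathrm{Tr}$ to reduce to the base cases $\theta^{k}$ with $0\le k\le p-1$ and compute those directly. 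Both arrangements yield the same computation, and your explicit verification of the ranges $0\le k\le p-2$, $p\le k\le 2p-3$, and $k=-1$ via $\theta^{-1}=\theta^{p-1}-1$ is fine.
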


Now, we can prove \cref{Bell_factorial} following Radoux's idea.

\begin{proof} [Proof of \cref{Bell_factorial}]
Set $\mathbb{B}_{p}^{(n)}(x) := (b_{n+i+j}(x))_{0 \leq i, j \leq p-1}$.
Then, \cref{Touchard_congruence} shows that
\begin{align*}
	\det\mathbb{B}_{p}^{(n+1)}(x)
	&= \det \left( \begin{matrix}
	b_{n+1}(x) & b_{n+2}(x) & \cdots & b_{n+p}(x)-b_{n+1}(x) \\
	b_{n+2}(x) & b_{n+3}(x) & \cdots & b_{n+p+1}(x)-b_{n+2}(x) \\
	\vdots & \vdots & & \vdots \\
	b_{n+p}(x) & b_{n+p+1}(x) & \cdots & b_{n+2p-1}(x)-b_{n+p}(x) \\
	\end{matrix} \right) \\
	%
	%&\equiv x^{p} \det \left( \begin{matrix}
	%b_{n+1}(x) & b_{n+2}(x) & \cdots & b_{n}(x) \\
	%b_{n+2}(x) & b_{n+3}(x) & \cdots & b_{n+1}(x) \\
	%\vdots & \vdots & & \vdots \\
	%b_{n+p}(x) & b_{n+p+1}(x) & \cdots & b_{n+p-1}(x) \\
	%\end{matrix} \right)
	%
	%\equiv x^{p} \cdot (-1)^{p-1} \det\mathbb{B}_{p}^{(n)}(x) \bmod p.
	&\equiv x^{p} \cdot (-1)^{p-1}\det\mathbb{B}_{p}^{(n)}(x)
	\equiv \cdots
	\equiv x^{pn}\det\mathbb{B}_{p}^{(0)}(x) \bmod p.
\end{align*}
Since $\tau_{p}(a) \equiv (p-1)/2 \bmod p-1$
and $a^{\tau_{p}(a)} \equiv (a/p)_{2} \bmod p$,
\cref{Sylvester_type} implies that
\[
	\det\mathbb{B}_{p}^{(\tau_{p}(a))}(a^{-1})
	\equiv a^{-p\tau_{p}(a)} \det\mathbb{B}_{p}^{(0)}(a^{-1})
	%\equiv \prod_{j = 0}^{p-1} j!
	\equiv \left( \frac{2}{p} \right)_{2} \left( \frac{p-1}{2} \right)! \bmod p.
\]
%%%
On the other hand,
by \cref{trace_formula} and \cref{trace_calculus},
we have
\[
	b_{\tau_{p}(a)+1}(a^{-1})
	\equiv \cdots
	\equiv b_{\tau_{p}(a)+p-1}(a^{-1})
	\equiv b_{\tau_{p}(a)+p+1}(a^{-1})
	\equiv \cdots
	\equiv b_{\tau_{p}(a)+2p-1}(a^{-1})
	\equiv 0 \bmod p
\]
(cf.\ \cite{Kahale,Radoux_Artin_Schreier} for $a = 1$),
and hence
\begin{align*}
	\det\mathbb{B}_{p}^{(\tau_{p}(a))} (a^{-1})
	&\equiv \det\left( \begin{matrix}
	b_{\tau_{p}(a)}(a^{-1}) & 0 & \cdots & 0 & 0 \\
	0 & 0 & \cdots & 0 & a^{-p}b_{\tau_{p}(a)}(a^{-1})  \\
	0 & 0 & \cdots & a^{-p}b_{\tau_{p}(a)}(a^{-1})  & 0 \\
	\vdots & \vdots &  & \vdots & \vdots \\
	0 & a^{-p}b_{\tau_{p}(a)}(a^{-1})  & \cdots & 0 & 0 \\
	\end{matrix} \right) \\
	&\equiv (-1)^{\frac{p-1}{2}}b_{\tau_{p}(a)}(a^{-1}) \bmod p.
\end{align*}
By comparing the above two formulae,
we obtain the desired congruence.
\end{proof}

\begin{remark}
In \cite[Theorem 5]{Junod_trace},
Junod stated his trace formula in the form
\[
	a^{m}b_{m}(a^{-1})
	\equiv -a^{\tau_{p}(1)} b_{\tau_{p}(1)} \Tr(\theta^{m-1-\tau_{p}(a)})
		\bmod p.
\]
It follows from \cref{trace_formula} and the fact that
$(a/p)_{2}\Tr(\theta^{\tau_{p}(a)})$ is independent of $a$ (but has deep arithmetic nature as stated in \cref{main_1}).
However, Junod's proof of the latter fact
depends on the assumption that $(a^{-1}\theta)^{p} \equiv a^{-1}\theta+1 \bmod p$,
which is wrong unless $a \equiv 1 \bmod p$.
Now, we can fill the gap %in his proof
by combining \cref{Bell_trace} and \cref{Bell_factorial}.
\end{remark}

\begin{remark}
The right hand side of \cref{Bell_factorial},
or equivalently $b_{\tau_{p}(a)}(a^{-1}) \bmod p$,
can be described by the special value $\Gamma_{p}(1/2)$ of Morita's $p$-adic $\Gamma$-function \cite{Morita},
and hence by a quadratic Gauss sum \cite[Theorem 1.7]{Gross-Koblitz}.
%Of course, the direct connection between $(\frac{p-1}{2})!$ and the Gauss sum was likely known by Gauss.
In fact, these quantities are
strongly related to the Artin-Schreier curve defined by $x^{p}-x = y^{2}$ \cite[\S\S 15--16]{Lang}.
%In fact, the Gross-Koblitz formula can be explained via the Frobenius action on it (cf.\  \cite[\S\S 15--16]{Lang}).
Hence, it should be an interesting problem to give geometric interpretations of more general $b_{m}(a^{-1}) \bmod p$.
\end{remark}

%%%
\appendix
\section{Proof of \cref{trace_calculus}}

Set $a_{n, 0}, ..., a_{n, p-1} \in \F_{p}$ so that
$\theta^{n} = \sum_{i = 0}^{p-1} a_{n, i} \theta^{i}$.
Then,
for every $m \in \Z_{\geq 0}$,
we have
\[
	\theta^{np^{m}}
	= \left( \sum_{j = 0}^{p-1} a_{n, j} \theta^{j} \right)^{p^{m}}
		= \sum_{j = 0}^{p-1} a_{n, j} (\theta+m)^{j}
			= \sum_{i = 0}^{p-1} \left( \sum_{j = i}^{p-1} \binom{j}{i} a_{n, j} m^{j-i} \right) \theta^{i}.
\]
On the other hand,
since $\theta^{0} = 1, \theta, ..., \theta^{p-1}$ are linearly independent over $\F_{p}$,
we see that
\[
	\Tr(\theta^{n})
	= \sum_{m = 0}^{p-1} \theta^{np^{m}}
		= \sum_{m = 0}^{p-1} \sum_{j = 0}^{p-1} \binom{j}{0} a_{n, j} m^{j-0}
			= \sum_{j = 0}^{p-1} a_{n, j} \sum_{m = 0}^{p-1} m^{j}.
\]
Therefore,
the claimed identity follows from the following congruence:
\[
	S_{j}(p)
	:= \sum_{m = 0}^{p-1} m^{j}
	\equiv \begin{cases}
	-1 \bmod p & \text{if $j > 0$ and $j \equiv 0 \bmod p-1$}  \\
	0 \bmod p & \text{otherwise}
	\end{cases}.
\]
Since the case of $j \equiv 0 \bmod p-1$ is obvious,
we may assume that $1 \leq j \leq p-2$.
By summing up
\[
	\sum_{m = 0}^{j} \binom{j+1}{m} k^{m}
	= (k+1)^{j+1} - k^{j+1}
		\quad (k \in \Z_{\geq 0})
\]
with respect to $0 \leq k \leq p-1$,
we have
\[
	\sum_{m = 0}^{j} \binom{j+1}{m} S_{m}(p)
	= p^{j+1}
	\equiv 0 \bmod p.
\]
By induction on $j$, we obtain the desired congruence.

\section*{Acknowledgements}
The author would like to thank
Ken-ichi Bannai and anonymous referees for their careful reading of early manuscripts of this paper
and giving many valuable comments.
The author would like to thank also
Takashi Hara, Tomoya Machide, Hideki Matsumura, and Takuya Aoki
for their interest in this work.
The author would like to thank also Masataka Ono
for pointing out errors in an early manuscript of this paper.

\begin{bibdiv}
\begin{biblist}
\bibselect{polyexp}
\end{biblist}
\end{bibdiv}

\end{document}